\newcommand \pa {\partial}
\newtheorem{theorem}{Theorem}
\newtheorem{corollary}[theorem]{Corollary}
\newtheorem{lemma}[theorem]{Lemma}
\newtheorem{definition}[theorem]{Definition}
\newcommand\ind{\operatorname{ind}}
\newcommand\sgn{\operatorname{sgn}}
\newcommand\IH{\operatorname{IH}}
\newcommand\WH{\operatorname{WH}}
\newcommand\Hom{\operatorname{Hom}}
\newcommand\bM{\overline{M}}
\newcommand\bU{\overline{\mathcal{U}}}
\newcommand\tM{\widetilde{M}}
\newcommand\bL{\overline{L}}
\newcommand\bW{\overline{W}}
\newcommand\bcU{\overline{\mathcal{U}}}
\newcommand\bm{\overline{m}}
\newcommand\bbN{\mathbb{N}}
\newcommand\cU{\mathcal{U}}
\newcommand\bbR{\mathbb{R}}
\newcommand\bbZ{\mathbb{Z}}
\newcommand\CI{\mathcal{C}^{\infty}}
\newcommand\cC{\mathcal{C}}
\newcommand\w{{\rm w}}
\title[Hodge cohomology of iterated fibred cups metrics]{Weighted Hodge cohomology of iterated fibred cusp metrics}
\author{Eug\'enie Hunsicker}
\address{Department of Mathematical Sciences, Loughborough University}
\email{E.Hunsicker@lboro.ac.uk}
\author{Fr\'ed\'eric Rochon}
\address{Département de Mathématiques, UQÀM}
\email{rochon.frederic@uqam.ca}
\begin{document}

\maketitle

\begin{abstract}
On a smoothly stratified space, we identify intersection cohomology of any given perversity with 
an associated weighted $L^2$ cohomology for iterated fibred cusp metrics on the smooth stratum.
\end{abstract}
\bigskip

The Hodge theorem for smooth compact manifolds establishes an important link between two analytic
invariants of a manifold, the vector space of ($L^2$) harmonic forms over the manifold and the 
($L^2$) cohomology, and a topological 
invariant of the manifold, the cohomology with real coefficients, calculated using cellular, simplicial
or smooth deRham theory.  In the 1980's Cheeger, together with Goresky and MacPherson \cite{C1980}, \cite{CGM1982}, discovered that
this important link extends to a subset of pseudomanifolds called Witt spaces, where the relationship
is between $L^2$ cohomology with respect to an (incomplete) iterated conical metric and the (unique) 
middle perversity intersection cohomology, again with real coefficients, see also \cite{Nagase1987}, \cite{BHS1992} and \cite{ALMP2011}. 

The purpose of this paper is to show that by considering instead a natural class of complete metrics called iterated fibred cusp metrics, one can circumvent the analytical difficulties due to the incompleteness of iterated conical metrics, opening in this way a simpler and more direct route towards  a description of intersection cohomology in terms of harmonic forms.  More precisely, for any smoothly stratified space, $X$, we obtain a Hodge theorem identifying the intersection cohomology of $X$ of a given perversity with a weighted $L^2$ cohomology of the 
regular set $X \setminus X_{\rm sing}$, endowed with a complete iterated fibred cusp metric
which near the singular strata has hyperbolic cusp type behaviour on the link.
The Kodaira decomposition theorem
tells us that when the weighted $L^2$ cohomology with respect to some metric is finite dimensional, it is 
isomorphic to the space of weighted $L^2$ harmonic forms, so as a consequence, we also get
an isomorphism to the space of weighted $L^2$ harmonic forms for this metric.  By reference
to the theory of Hilbert complexes, see \cite{BL1992}, we additionally obtain the corollary
that the weighted Hodge Laplacian associated to such a metric and weight is Fredholm as an 
unbounded operator on the space of weighted $L^2$ differential forms.  In particular, when $X$ is a Witt space,
we show that the unweighted $L^2$ cohomology on $X$ with respect to a complete iterated fibred
cusp metric is isomorphic to the (unique) middle perversity intersection cohomology of $X$.
 
Our Hodge theorem can be seen as a natural generalization, from the setting of smooth K\"ahler 
manifolds to the singular Witt setting, the result by Timmerscheidt in which the 
$L^2$ cohomology of the complement of a normal crossings divisor in a K\"ahler manifold, endowed with a particular complete
metric, is shown to be isomorphic to the cohomology of its (smooth) compactification (contained in an appendix to \cite{EVT}).  
Our results also have an intersection with the Zucker conjecture, proved by Saper and Stern, and independently by Looijenga, 
which says that the $L^2$ cohomology of a Hermitian locally symmetric space is isomorphic to the middle perversity intersection cohomology of its
Borel-Bailey compactification \cite{SS1990}.  All of these results have in common that the metric in question is complete, and near each 
compactifying stratum, has some generalised hyperbolic cusp type behaviour.  Furthermore, in our theorem, as in the result of Timmerscheidt and 
in the Zucker conjecture, the $L^2$ cohomology is finite dimensional due to the vanishing of some cohomology classes that arise in 
its calculation.  For general iterated fibred cusp metrics, without the Witt condition, the $L^2$ cohomology will be infinite dimensional.  In this
case, harder analytic results are required to prove a Hodge theorem.  This was carried out in the case of one singular stratum in 
\cite{HHM2004},  where the first author and her collaborators proved that the space of $L^2$ harmonic forms for a manifold with fibration boundary, endowed with a fibred cusp metric (the base case for iterated fibred cusp metrics), is
isomorphic to the image of lower middle perversity intersection cohomology in upper middle perversity
intersection cohomology of its fibrewise compactification, see also \cite{GR} for a recent generalization of this result to some manifolds with foliated boundary.  

First let us recall what is meant by weighted $L^2$ harmonic forms and the weighted $L^2$ cohomology over
a manifold $M$ with respect to a metric $g$ and a weight $\w$.
A weighted $L^2$ space for any metric $g$ and positive weight function $\w$ on $M$
is a space of forms:
\[
L^2\Omega^*(M,g,{\rm w}) := \{ \omega \in \Omega_{}^*(M) \mid
\int_M ||\w^{-1} \omega||_{g}^2 {\rm dvol}_{g} < \infty\}.
\]
Here $||\,  \cdot \, ||_{g}$ is the pointwise metric on the space of differential forms over $M$
induced by the metric on $M$.  This may be completed to a Hilbert space with respect to the inner product
\[
\langle \omega, \eta \rangle_{g,\w} = \langle \w^{-1} \omega,\w^{-1}\eta \rangle_{g},
\]
where the inner product on the right is the standard $L^2$ inner product on differential forms over $M$ for 
the metric $g$.

Let $d$ represent the de Rham differential 
on smooth forms over $M$ and $\delta_{g,\w}$ represent its formal adjoint with respect to $\langle \cdot,\cdot\rangle_{g,\w}$.  Then $D_{g,\w}:= d + \delta_{g,\w}$ is an elliptic differential operator on 
the space of smooth forms over $M$.  If $\w=1$, the elements of the kernel of $D_{g,\w}$ that lie in $L^2$ are the standard 
space of $L^2$ harmonic forms over $(M,g)$.  More generally, we denote the space of weighted $L^2$ harmonic
forms on $(M,g)$ by:
\[
\mathcal{H}^*_{L^2}(M, g,\w):= \left\{\omega \in  L^2\Omega^*(M,g,\w) \mid D_{g,\w}\omega =0 \right\}.
\] 

The weighted $L^2$ cohomology of a complete manifold
may be computed from its complex of weighted $L^2$ forms:  
$$
\xymatrix{
   \cdots \ar[r]^-{d} & L^2_d\Omega^{k-1}(M,g,\w) \ar[r]^d & L^2_d\Omega^{k}(M,g,\w) \ar[r]^d & L^2_d\Omega^{k+1}(M,g,\w) \ar[r]^-{d} & \cdots,
      }
$$
where 
$$
L^2_d\Omega^{k-1}(M,g,\w) = \{ \omega\in L^2\Omega^k(M,g,\w) \; ; \; d\omega \in L^2\Omega^{k+1}(M,g,\w)   \}.$$
That is, the $L^2$-cohomology is given by
$$
  L^2H^k(M,g,\w) := \{ \omega\in L^2\Omega^k(M,g,\w) \; ;\; d\omega=0\} /  \{d\eta \; ; \; \eta\in L^2_d\Omega^k(M,g,\w)\}.
$$
It is standard that the space $L^2\Omega^*(M,g,{\rm w})$ does not depend sensitively on $g$ and $\w$, but
only on $g$ up to quasi-isometry, where $g$ and $g'$ are quasi-isometric if for all $p\in M$ and $v,w \in T_pM$,
\[
\frac{1}{c} g_p(v,w) \leq g'_p(v,w) \leq c g_p(v,w)
\]
for some uniform constant $c>0$.  Similarly, two weight functions give the same weighted $L^2$ space if they
are equivalent weights in the sense that
\[
\frac{1}{c}\w(p)\leq \w'(p) \leq c \w(p)
\]
for all $p \in M$ and some uniform constant $c>0$.  Since the operator $d$ does not depend on either $g$ or $\w$,
this implies in turn that $L^2H^k(M,g,\w)$ only depends on $g$ up to quasi-isometry and on $\w$ up to equivalence
of weight functions.

Next let us recall some facts about smoothly stratified spaces and their resolutions as manifolds with corners.  Let $X$ be a smoothly stratified space in the sense of \cite{ALMP2011}, see also \cite{verona} and \cite{DLR2011}.  
There is a resolution of $X$ by a manifold with fibred corners, $q:\tM \to X$, where $q$ is a diffeomorphism from the interior of $M$ to the regular stratum of $X$, and is a fiber bundle $q_i: H_i \to S_i$ over some singular
stratum when restricted to the interior of each boundary hypersurface of $M$.  The fiber, $F_i$, of $q_i$ is in turn a resolution of the link $\bL_i$ of the stratum $S_i \subset X$.  

Recall that the (open) singular strata, $S_i$ of $X$ satisfy a partial order relationship, whereby we write 
$S_i \leq S_j$ if $S_i \subset \overline{S}_j$, and this partial order lifts to a partial order on boundary hypersurfaces.
The depth of a stratum $S_i$ is the maximum number of strata that lie above it in the partial order relationship.
The link of any stratum in $X$ has strictly lower depth than $X$, which permits recursive arguments on depth.

There exists on $\tM$ a collection of positive functions $x_i$ that define the hypersurfaces, $H_i$ of $\tM$ and descend to a collection
of functions on $X$ (which we refer to by the same notation) that define the singular strata $S_i$.  These may be chosen so that 
for each $p\in S_i$, there exists a neighborhood $\bU \subset X$ of $p$ and a smoothly stratified map $\phi_{\bU}$ of the form
$$
     \phi_{\bU}: \bU \cong C_1(\bL_i)\times V,
$$
where $V$ is an open ball in the Euclidean space whose image under the homeomorphism is compactly contained in $S_i$, 
$\bL_i$ is a smoothly stratified space, and
$$
     C_1(\bL_i)= [0,1)_{x'_i}\times \bL_i/ \{0\}\times \bL_i
$$
is the cone over $\bL_i$.  
We say $\bU$ is a \textbf{regular neighborhood} of $p$ in $X$.  Such an associated
set of stratum defining functions and neighborhoods is called a set of iteratively defined ``control data".  
The space $X$ may be covered by a finite number of such neighbourhoods.
On such a neighborhood, $x'_i=x_i\circ \phi_{\bU}^{-1}$.  Furthermore, by \cite[Lemma~1.4]{DLR2011}, we can assume that each other stratum defining
function $x_j$ is  in this neighborhood factorizes through $\phi_{\bU}$ and projection onto a stratum defining function on $\bL_i$ (constant on $V$) if $S_i\subset \overline{S}_j$ or a positive function uniformly bounded away from zero otherwise.  Those $x_j$ factorizing through $\bL_i$ then form corresponding stratum defining functions on $\bL_i$ (see \cite{DLR2011}).

We can now define the metrics we will consider on $M:=X \setminus X_{\rm sing} = \tM \setminus \pa\tM$.

\begin{definition}
A \textbf{quasi iterated fibred cusp metric} is defined inductively on the depth of $X$ to be a complete Riemannian metric $g$ on the regular set $M$ of $X$ such that for any $p\in X_{\rm sing}$ and any regular neighborhood $\bU=C_1(\bL)\times V$ of $p$, the restriction of $g$ to $\cU=\bU\setminus (\bU \cap X_{\rm sing})$ is 
quasi isometric to a metric of the form
\begin{equation}\label{eq1}
        g_{\cU}=  \frac{dx^2}{x^2}+ g_{V} + x^2g_L,
\end{equation}
where $g_V$ is a Riemannian metric on $V$ and $g_L$ is a quasi iterated fibred cusp metric on the interior $L$ of $\bL$. 
\label{qifc.1}\end{definition} 

We make two notes about these metrics.  First of all, if we rearrange and change coordinates in (\ref{eq1}), letting $r=-\log(x) \in (\epsilon, \infty)$, we find that
the metric takes on the more familiar generalised hyperbolic cusp form on fibres:
$$
        g_{\cU}=  dr^2 + e^{-2r}g_L+ g_{V}.
$$
Second, we note that the iterated fibred cusp metrics of \cite{DLR2011} constitute a special case of quasi iterated fibred cusp metrics.  In fact, quasi iterated fibred cusp metrics could alternatively be defined as complete Riemannian metrics on the regular set $M$ of $X$ that are quasi-isometric to iterated fibred cusp metrics.  Since the  
 the $L^2$ cohomology on a Riemannian manifold depends only on the quasi-isometry class of the metric, considering this larger class of metrics leads to no extra difficulties.  This also means that, when calculating the $L^2$ cohomology on a regular neighborhood $U$, we are free to consider a model metric as in Equation (\ref{eq1}).

We may now state our result.

\begin{theorem}
Let $g$ be a quasi iterated fibred cusp metric on the interior $M$ of a smoothly stratified space, $X$, and let $\{x_i\}_{i=1}^n$ denote the set of boundary 
defining functions on the resolution $\widetilde{M}$ of $X$ given by a choice of iterated control data.  Let $\frak{p}$ be a perversity,
$0<\epsilon<\frac12$,
and define the weight function $\w_\frak{p}=x_1^{i_1} \cdots x_n^{i_n}$, where $i_j = \frak{p}(c_j)  - (c_j-3)/2+ \epsilon$ 
when the stratum $S_j$ corresponding to the function $x_j$ has codimension $c_j$.
Then the weighted $L^2$ cohomology of $(M,g)$
with respect to the weight function $\w_\frak{p}$ and the space of $\w_\frak{p}$-weighted $L^2$ harmonic forms are naturally isomorphic to the intersection cohomology of $X$ of perversity $\frak{p}$:
$$
    \mathcal{H}^*_{L^2}(M,g,\w_\frak{p})\cong L^2H^*(M,g,\w_\frak{p}) \cong \IH^*_{\frak{p}}(X).
$$ 
Moreover, $d$ and its formal adjoint have closed range.
\label{ifc.1}\end{theorem}

Before we begin the proof, we recall that 
we can turn the weighted $L^2$ complex of forms over $M$ into a complex of sheaves over $X$ as follows.  
Over each regular neighborhood $\bU$ in $X$, we define the complex of weighted $L^2$-forms
associated to the iterated fibred cusp metric $g$ by:
$$
\xymatrix{
   \cdots \ar[r]^{d \hspace{1cm}} & L^2_d\Omega^{k-1}(\bU,g,\w) \ar[r]^d & L^2_d\Omega^{k}(\bU,g,\w) \ar[r]^{d \hspace{0.3cm}} & L^2_d\Omega^{k+1}(\bU,g,\w) \ar[r]^{\hspace{0.8cm} d} & \cdots
      },
$$
where
$$
     L^2_d\Omega^{k-1}(\bU,g,\w) = \{ \omega\in L^2\Omega^k(\cU,g,\w) \; ; \; d\omega \in L^2\Omega^{k+1}(\cU,g,\w)   \}
$$
and $\cU= \bU\cap M$.
This gives a complex of presheaves on $X$,
$$
\xymatrix{
   \cdots \ar[r]^{d \hspace{0.5cm}}& \w L^2_d\Omega^{k-1}\ar[r]^d & \w L^2_d\Omega^{k}\ar[r]^d & \w L^2_d\Omega^{k+1} \ar[r]^d & \cdots.
      }
$$
These pre-sheaves may be sheafified to form the complex of sheaves that we also denote $\w L^2_d\Omega^k$ on $X$.
Before proving our main theorem, the following result will be useful.
\begin{lemma}
For all $k\in \bbN_0$, the sheaf $\w L^2_d\Omega^k$ on $X$ is fine.  
\label{qifc.2}\end{lemma}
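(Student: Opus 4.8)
The plan is to exhibit $L^2_d\Omega^k$ as a sheaf of modules over a fine sheaf of rings on $X$; since any sheaf of modules over a fine sheaf of rings is itself fine, this will suffice. For the ring sheaf I would take $\mathcal{A}_X$, the sheaf of real-valued functions on $X$ that are smooth on the regular set $M$ in the stratified sense of \cite{ALMP2011} (equivalently, functions that pull back to smooth functions on the resolution $\tM$ which are locally constant along the fibres of the boundary fibrations near $\pa\tM$). That $\mathcal{A}_X$ is fine amounts to the existence of partitions of unity by such functions subordinate to any locally finite open cover of $X$; this is classical when $X$ is a manifold, and I indicate below how to get it in general by induction on depth. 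Granting this, the point to verify is that $L^2_d\Omega^k$ is an $\mathcal{A}_X$-module, \ie that multiplication by a section $\phi$ of $\mathcal{A}_X$ maps $L^2_d\Omega^k$ to itself.

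So let $\phi\in\mathcal{A}_X$ and let $\omega$ be a local section of $L^2_d\Omega^k$, so that $\omega$ and $d\omega$ are both $L^2$. Since $\phi$ is bounded, $\phi\omega\in L^2$; by the Leibniz rule $d(\phi\omega)=d\phi\wedge\omega+\phi\,d\omega$, so it remains only to check that $d\phi$ is bounded with respect to $g$, for then $d\phi\wedge\omega\in L^2$ and hence $d(\phi\omega)\in L^2$. By quasi-isometry invariance of the $L^2$ spaces we may test this in a regular neighborhood $\bU=C_1(\bL)\times V$ using the model metric (\ref{eq1}). There $|dx|_g^2\sim x^2$, the $V$-directions contribute bounded quantities, and the cometric on the link carries a factor $x^{-2}$; writing $d\phi=(\pa_x\phi)\,dx+d_V\phi+d_L\phi$ we obtain $|d\phi|_g^2\le C(x^2(\pa_x\phi)^2+|d_V\phi|^2_{g_V}+x^{-2}|d_L\phi|^2_{g_L})$. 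The first two terms are bounded since $\phi$ is smooth up to $x=0$; and a stratified-smooth function on $X$ restricts at $x=0$ to a function of the base $V$ alone, being constant along $\bL$, so Taylor's theorem in $x$ gives $|d_L\phi|_{g_L}=O(x)$ and the last term is bounded as well. This is the geometric heart of the matter: it is precisely the $x^2$-degeneration of $g$ in the link directions, compensated by the vanishing of $d_L\phi$ at $x=0$, that makes multiplication by $\phi$ preserve the $L^2_d$ complex.

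It remains to construct the partitions of unity in $\mathcal{A}_X$, which I would do by induction on the depth of $X$, the base case being the classical statement for manifolds. First note that the radial cutoffs $\alpha(x)$ with $\alpha\in\CI([0,1))$ lie in $\mathcal{A}_X$, since $|d(\alpha(x))|_g\sim x|\alpha'(x)|$ is bounded; using them one splits any locally finite cover of $X$ into a part supported near $X_{\rm sing}$, inside a union of regular neighborhoods, and a part supported where all boundary defining functions are bounded below by some $\epsilon>0$. On the latter region the metric in each regular neighborhood is bi-Lipschitz to $dx^2+g_V+g_L$, a product whose link factor carries a quasi iterated fibred cusp metric on the lower-depth space $\bL$, so one combines ordinary smooth bump functions in $(x,v)$ with partitions of unity on the link furnished by the inductive hypothesis. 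Near $X_{\rm sing}$ one uses products $\alpha(x)\beta(v)$ with $\{\beta\}$ a smooth partition of unity in the base directions; these lie in $\mathcal{A}_X$ because $dx$ and $dv$ are $g$-orthogonal and each factor has bounded differential. The main obstacle, relative to the smooth compact case, is exactly this iteration: the region ``away from one stratum'' is not relatively compact in $M$, so one cannot merely quote the existence of smooth partitions of unity there but must feed the problem back into the inductive hypothesis on the links, keeping the supports of all link-dependent cutoffs inside $\{x\ge\epsilon\}$ so that the $x^2$-rescaling of $g_L$ causes no harm.
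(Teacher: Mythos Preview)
Your approach is correct in outline but takes a more elaborate route than the paper.  The paper does not introduce an auxiliary sheaf of rings at all: it simply writes down, for a cover of $X$ by open sets in $M$ and regular neighborhoods $\bU_i=C_1(\bL_i)\times V_i$, bump functions $\rho_i=\pi_i^*\phi_i$ with $\phi_i\in\CI_c([0,1)\times V_i)$, where $\pi_i:C_1(\bL_i)\times V_i\to [0,1)\times V_i$ is the projection.  These are \emph{identically} constant along the link, so $d_{L_i}\rho_i\equiv 0$ and $|d\rho_i|_g^2\le C\bigl(x^2(\pa_x\phi_i)^2+|d_V\phi_i|_{g_V}^2\bigr)$ is bounded by inspection; normalising gives the partition of unity for $\Hom_{\bbZ}(L^2_d\Omega^k,L^2_d\Omega^k)$.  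No Taylor expansion and no induction on depth is needed for the differential bound.

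By contrast, you allow cutoffs that genuinely depend on the link variable and then argue $|d_L\phi|_{g_L}=O(x)$ by Taylor.  This works, but the step you pass over is that when $\bL$ itself has positive depth, $g_L$ is again an iterated fibred cusp metric whose \emph{cometric} blows up near the strata of $\bL$; so ``$d_L\phi=x\cdot(\text{smooth})$'' does not immediately give a uniform bound on $|d_L\phi|_{g_L}$.  One has to feed the stratified smoothness of $\phi$ along the deeper faces back in and iterate, which is exactly the induction you set up in your last paragraph; the paper's choice of base-pulled-back cutoffs sidesteps this entirely.  What your approach buys is a tidier statement --- $L^2_d\Omega^k$ as a module over a fine sheaf of rings $\mathcal{A}_X$ --- which is pleasant if one wants to do further sheaf-theoretic constructions, at the cost of a longer argument than is strictly necessary here.
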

\begin{proof} 
We need to show that the sheaf $\Hom_{\bbZ}(\w L^2_d\Omega^k, \w L^2_d\Omega^k)$ admits partitions of unity.  Let $\{\bU_i\}$ be an open cover of $X$ by open sets in $M$ and regular neighborhoods in $X$.  Since $X$ is compact, we can assume that this cover contains finitely many open sets.    By slightly shrinking each of the $\bU_i$, we can get another cover $\{\bW_i\}$ of the same form, where $\bW_i\subset \bU_i$ for each $i$.  For each $\bU_i$ contained in $M$, we can then consider a smooth nonnegative function $\rho_i\in \CI_c(\bU_i)$ such that $\rho_i$ is nowhere zero on $\bW_i\subset \bU_i$.  When $\bU_i$ is a regular neighborhood of a point $p_i\in\pa\bM$ of the form
$$
        \bU_i = C_1(\bL_i)\times V_i,
$$  
we can take a nonnegative function $\rho_i\in \cC^0_c(\bU_i)$ with $\rho_i$ nowhere zero on $\bW_i$ to be of the form $\rho_i= \pi^*_i \phi_i$ for some $\phi_i\in \CI_c([0,1)\times V_i)$, where $\pi_i: C_1(\bL)\times V_i\to [0,1)_x\times V_i$ is the obvious projection.  In this way, we have that $d\rho_i\in L^{\infty}\Omega^1(M,g)$.  This means the functions $\psi_i = \frac{\rho_i}{\sum_j \rho_j}$ form a partition of unity for the sheaf $\Hom_{\bbZ}(\w L^2_d\Omega^k, \w L^2_d\Omega^k)$.

\end{proof}

We can  now prove Theorem \ref{ifc.1}.
\begin{proof}
We proceed by induction on the depth of $X$.  If $X$ has depth zero, the weight function may be taken as constant and the result follows from the standard Hodge theorem on compact manifolds, since all positive weights give the same weighted cohomology.  Suppose now that $X$ has depth $d$ and that the theorem holds for all spaces of depth less than $d$ and perversities on these spaces.  Given a point 
$p$ in $S \subset X\setminus M$, where $S$ has codimension $c$, we can find a regular neighborhood $\bcU$ of $X$ of the form
$$
     \bcU= C_1(\bL)\times V,
$$
where $V$ is an open ball in the Euclidean space and $\bL$ is a smoothly stratified space of depth less than $d$.  If $g_L$ is an iterated fibred cusp metric on the interior $L$ of $\bL$ and $g_V$ is a choice of Riemannian metric on $V$, then the metric
$$
      g_{\cU}= \frac{dx^2}{x^2}+ g_V + x^2 g_L
$$
is an iterated fibered cusp metric on $\cU=\bcU\setminus(\cU\cap X_{\rm sing})$.  In particular, $g_{\cU}$ is quasi-isometric to the restriction of $g$ on $\cU$, so we can use $g_{\cU}$ instead of $g$ to compute the $L^2$-cohomology of $\cU$.  Further, the weight function in this neighbourhood is equivalent to one of the form
$\w_{\frak{p}}(x,v,s)=x^{i_j} \w'_{\frak{p}}(s)$, where $\w'(s)$ is a weight function on the link, $L$, corresponding to the same perversity.  

Now, by our induction hypothesis, the exterior differential $d$ on $(L,g_L)$ has closed range, so we can apply the Kunn\"eth formula of Zucker \cite[Corollary~2.34]{Zucker} to the metric $g_{\cU}$, which gives,
$$
     L^2H^k(\cU,g,\w)= \bigoplus_{i=0}^1\WH^i((0,1),\frac{dx^2}{x^2},k-i-\frac{\ell}{2}+i_j)\otimes L^2H^{k-i}(L,g_L,\w'), $$
where $\ell=\dim L$  and 
$$
\WH^i((0,1),\frac{dx^2}{x^2},a)= \frac{\{ \omega\in x^aL^2\Omega^i((0,1),\frac{dx^2}{x^2}) \; | \; d\omega=0\}}{\{d\eta\; | \; \eta\in x^aL^2\Omega^{i-1}((0,1),\frac{dx^2}{x^2})\;d\eta\in x^aL^2\Omega^i((0,1),\frac{dx^2}{x^2})\}}
$$ is the weighted $L^2$-cohomology on the interval $(0,1)$ with metric $\frac{dx^2}{x^2}$ and weight $x^a$ as defined in \cite[(12)]{HHM2004}.     Now, we have that $WH^1((0,1),\frac{dx^2}{x^2},a)=\{0\}$ for $a\ne 0$ and is infinite dimensional when $a=0$ (which can never happen by our choice of $\w$), while
$$
      \WH^0((0,1),\frac{dx^2}{x^2},a)=  \left\{ \begin{array}{ll}
            \{0\}, & a\ge 0, \\
            \bbR, & a<0.
      \end{array}
        \right.
$$ 
On the other hand, by our induction hypothesis, $L^2H^*(L,g_L,\w')\cong \IH_{\frak{p}}^*(\bL)$.  Thus we obtain that
$$
  L^2H^k(\cU)= \left\{  \begin{array}{ll}
  \IH^k_{\frak{p}}(L), & k< c_j-2-\frak{p}(c_j) - \epsilon, \\
      \{0\}, & k\ge c_j-2-\frak{p}(c_j) - \epsilon.
  \end{array}
        \right.
$$
Because $k$ is an integer, this can be rewritten as
$$
  L^2H^k(\cU)= \left\{  \begin{array}{ll}
  \IH^k_{\frak{p}}(L), & k < c_j-2 -\frak{p}(c_j), \\
      \{0\}, & k \geq c_j-2-\frak{p}(c_j).
  \end{array}
        \right. 
$$
Since $p\in X \setminus M$ was arbitrary, we can conclude by  \cite[Proposition~1]{HHM2004} that there is a natural isomorphism
$$
     L^2H^*(M,g,\w)\cong \IH^*_{\frak{p}}(X).
$$
Since $\IH^*_{\frak{p}}(X)$ is finite dimensional, this means that, the ranges of $d$ and its formal
adjoint $\delta_{g,\w}$ are closed.  From the Kodaira decomposition theorem 
$$
  L^2 \Omega^k (M,g)= \mathcal{H}_{L^2}^{k} (M,g,\w)\oplus \overline{d \mathcal{C}_c^{\infty}(M; \Lambda^{k-1}(T^*M))} \oplus \overline{\delta_{g,\w} \mathcal{C}^{\infty}_c(M; \Lambda^{k+1}(T^*M))},
$$
we therefore conclude as in \cite[\S~2.1]{HHM2004} that  the space of $L^2$ harmonic forms $\mathcal{H}_{L^2}^*(M,g,\w)$ is naturally identified with $L^2H^*(M,g,\w)$.
\end{proof}

In particular, if $X$ is a Witt space, we can consider unweighted $L^2$ cohomology and get:
\begin{theorem}
Let $g$ be a quasi iterated fibred cusp metric on the interior $M$ of a smoothly stratified Witt space, $X$.
Then the $L^2$ cohomology of $(M,g)$
and the space of $L^2$ harmonic forms are naturally isomorphic to the middle perversity intersection cohomology of $X$:
$$
    \mathcal{H}^*_{L^2}(M,g)\cong L^2H^*(M,g) \cong \IH^*_{\bm}(X).
$$ 
Moreover, $d$ and its formal adjoint have closed range.
\end{theorem}
\begin{proof}
The proof is identical to the one above, except we need to check that the unweighted cohomology is never infinite dimensional.
This would occur if we get the weight $a=0$ arising in a term of the form
\[
\WH^1((0,1),\frac{dx^2}{x^2},k-i-\frac{\ell}{2})\otimes L^2H^{k-i}(L,g_L),
\]
where the second term in the product is nontrivial.  However, by our induction hypothesis, $L^2H^*(L,g_L)\cong \IH_{\bm}^*(\bL)$.  In particular, the Witt condition then implies that $L^2H^{\frac{\ell}{2}}(L,g_L)=\{0\}$ if $\ell$ is even, so this can never occur.
\end{proof}
This easily leads to the following consequences for the signature operator.

\begin{corollary}
Let $g$ be a quasi iterated fibred cusp metric on the interior $M$ of a smoothly stratified Witt space $X$.  Then the associated signature operator $D_{g}=d+\delta_{g}$ is Fredholm with index given by 
$$
    \ind(D)=\sgn( \IH^*_{\bm}(X)).
$$
\label{ifc.2}\end{corollary}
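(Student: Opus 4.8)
The plan is to read off both assertions directly from Theorem~\ref{ifc.1} and the Hodge-theoretic formula for the signature, with essentially no new analysis. First I would record the Fredholm property. The operator $D_g=d+\delta_g$ is essentially self-adjoint because $(M,g)$ is complete. By Theorem~\ref{ifc.1} the complex $(L^2_d\Omega^*(M,g),d)$ has finite-dimensional cohomology and, as observed in the course of its proof, $d$ and $\delta_g$ have closed range, so $(L^2\Omega^*(M,g),d)$ is a Fredholm Hilbert complex in the sense of \cite{BL1992}. The general theory of Hilbert complexes then shows that $D_g$ is Fredholm as an unbounded operator on $L^2\Omega^*(M,g)$, with $\ker D_g=\mathcal{H}^*_{L^2}(M,g)$, which gives the first half of the statement.

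Next I would introduce the $\bbZ/2$-grading. Put $n=\dim M$. The interesting case is $n\equiv 0\pmod 4$; if $n$ is odd or $n\equiv 2\pmod 4$ then the chiral index below and $\sgn(\IH^*_{\bm}(X))$ both vanish and there is nothing to prove. Let $\tau$ be the standard involution on $L^2\Omega^*(M,g)$ obtained from the Hodge star operator twisted by suitable powers of $\sqrt{-1}$, so that $\tau^2=1$ and $D_g\tau=-\tau D_g$. Then $D_g$ restricts to a Fredholm operator $D_g^+\colon L^2\Omega^+\to L^2\Omega^-$ between the $\pm1$-eigenspaces of $\tau$, and $\ind(D)$ means $\ind(D_g^+)$ (the self-adjoint operator $D_g$ itself of course has index $0$). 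Since $D_g$ is self-adjoint, $\ind(D_g^+)=\dim(\mathcal{H}^*_{L^2}\cap L^2\Omega^+)-\dim(\mathcal{H}^*_{L^2}\cap L^2\Omega^-)$.

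I would then carry out the classical linear-algebra computation on the finite-dimensional $\tau$-graded space $\mathcal{H}^*_{L^2}(M,g)$. Since $\star$ is a fibrewise isometry commuting with $\Delta_g$, it preserves $\mathcal{H}^*_{L^2}(M,g)$; the bilinear form $Q(\omega,\eta)=\int_M\omega\wedge\eta$ is well defined because the wedge of two $L^2$ forms is integrable, and $Q(\omega,\star\omega)=\|\omega\|_{L^2}^2$, so $Q$ is nondegenerate on $\mathcal{H}^*_{L^2}(M,g)$ and in particular on $\mathcal{H}^{n/2}_{L^2}(M,g)$. Because $\tau$ interchanges $\mathcal{H}^k_{L^2}$ with $\mathcal{H}^{n-k}_{L^2}$ for $k\neq n/2$, each such pair contributes $0$ to the alternating dimension count, and one is left with $\ind(D_g^+)=\sgn\big(Q|_{\mathcal{H}^{n/2}_{L^2}(M,g)}\big)$. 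Under the Hodge identification $\mathcal{H}^{n/2}_{L^2}(M,g)\cong L^2H^{n/2}(M,g)$ of Theorem~\ref{ifc.1}, $Q$ corresponds to the cup-product Poincar\'e pairing on $L^2$-cohomology, so $\ind(D)=\sgn\big(L^2H^*(M,g)\big)$.

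Finally I would match this with the intersection-cohomology signature. The isomorphism $L^2H^*(M,g)\cong\IH^*_{\bm}(X)$ of Theorem~\ref{ifc.1} is induced by a quasi-isomorphism between the fine complex of sheaves $L^2_d\Omega^*$ and a soft resolution of the middle-perversity intersection chain complex; on a Witt space middle perversity is self-dual, so both complexes carry Poincar\'e--Lefschetz duality pairings and the identification can be taken to intertwine them, both pairings being given by wedge/cup product followed by evaluation against the fundamental class. Hence $\sgn\big(L^2H^*(M,g)\big)=\sgn\big(\IH^*_{\bm}(X)\big)$, completing the proof. The step I expect to require the most care is exactly this last one, verifying that the sheaf-theoretic isomorphism of Theorem~\ref{ifc.1} respects the two Poincar\'e duality structures so that the $L^2$-signature literally equals the intersection-cohomology signature; the Fredholmness and the reduction to the middle-degree pairing are then formal.
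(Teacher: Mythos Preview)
Your proposal is correct, and in fact spells out a great deal that the paper leaves implicit.  The paper's proof is a single sentence: it observes that the complex of sheaves $L^2_d\Omega^{*}$ is \emph{locally self-dual}, and then invokes the axiomatic characterisation of intersection cohomology.  That one line is doing exactly the work you flag as the delicate step at the end of your argument.

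Concretely, you proceed in the classical way: use Theorem~\ref{ifc.1} and Hilbert-complex theory to get Fredholmness, introduce the chirality involution $\tau$, reduce $\ind(D_g^+)$ to the signature of the pairing $Q$ on $\mathcal{H}^{n/2}_{L^2}$, and then argue that the isomorphism $L^2H^*(M,g)\cong\IH^*_{\bm}(X)$ intertwines $Q$ with the Goresky--MacPherson pairing.  The paper instead notes that the Hodge star equips $L^2_d\Omega^{*}$ with a Verdier self-duality, so that $L^2_d\Omega^{*}$ is a self-dual perverse-type complex satisfying the local computation established in the proof of Theorem~\ref{ifc.1}; by the axiomatic uniqueness of $\mathbf{IC}_{\bm}$ on a Witt space, the quasi-isomorphism to $\mathbf{IC}_{\bm}$ automatically respects the self-dualities, hence the signatures agree.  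Thus the paper's abstract route bypasses the explicit computation with $\tau$ and the careful verification that the pairing is preserved, while your route is more elementary and self-contained but makes that verification the acknowledged crux.  Both arrive at the same place; the trade-off is between a short appeal to sheaf-theoretic uniqueness and a longer but more transparent Hodge-theoretic computation.
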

\begin{proof}
This follows from the fact that the sheaf $L^2_d\Omega^k$ is a locally self-dual sheaf, and the
axiomatic characterisation of intersection cohomology.
\end{proof}

%
%
%
%
%
 \bibliography{hodgeifc}
 \bibliographystyle{amsplain}

\end{document}